\newcommand{\QQ}{\mathbb Q}
\newcommand{\CC}{\mathbb C}
\newcommand{\RR}{\mathbb R}
\newcommand{\PP}{\mathbb P}
\newcommand{\QQb}{\overline{\mathbb Q}}
\newcommand{\rat}{\dashrightarrow}
\renewcommand{\AA}{\mathbb A}
\theoremstyle{plain}
  \newtheorem{proposition}[equation]{Proposition}
  \newtheorem{theorem}[equation]{Theorem}
  \newtheorem{conjecture}[equation]{Conjecture}
\theoremstyle{definition}
\theoremstyle{remark}
  \newtheorem{remark}[equation]{Remark}
\newif\ifhascomments \hascommentstrue
\newcommand{\matt}[1]{{\color{red}[[\ensuremath{\spadesuit\spadesuit\spadesuit} #1]]}}
  \newcommand{\matt}[1]{}
\newcounter{BKMcM-counter}
\title{A rational map with infinitely many points of distinct arithmetic degrees}
\author{John Lesieutre}
\address{John Lesieutre\\ Penn State University Mathematics Dept \\ 204 McAllister Building \\ University Park, State College, PA 16802 \\ USA}
\email{jdl@psu.edu}
\thanks{J.L. is partially supported by NSF grant DMS-1700898.}
\author{Matthew Satriano}
\address{Matthew Satriano\\ Department of Pure Mathematics\\ University of Waterloo\\ Waterloo, ON N2L 3G1\\ Canada}
\email{msatrian@uwaterloo.ca}
\thanks{M.S. is partially supported by NSERC grant RGPIN-2015-05631.}
\begin{document}

\maketitle

\begin{abstract}
Let $f\colon X\dasharrow X$ be a dominant rational self-map of a smooth projective variety defined over $\overline{\QQ}$. For each point $P\in X(\QQb)$ whose forward $f$-orbit is well-defined, Silverman introduced the arithmetic degree $\alpha_f(P)$, which measures the growth rate of the heights of the points $f^n(P)$. Kawaguchi and Silverman conjectured that $\alpha_f(P)$ is well-defined and that, as $P$ varies, the set of values obtained by $\alpha_f(P)$ is finite. Based on constructions of Bedford--Kim and McMullen, we give a counterexample to this conjecture when $X=\PP^4$.
\end{abstract}

\section{Introduction}
Let \(f \colon X \rat X\) be a dominant rational map of a smooth projective variety defined over \(\QQb\). We let $I_f$ denote the indeterminacy locus of $f$, and $X_f(\QQb)$ denote the set of $\QQb$-points of $X$ whose forward \(f\)-orbit is well-defined, i.e.~those $P\in X(\QQb)$ such that $f^n(P)\notin I_f$ for all $n\geq0$. To each point $P\in X_f(\QQb)$, Silverman \cite{arithmetic-degree-first-defined} introduced the following quantity which measures the arithmetic growth rate of $f^n(P)$. Fix an ample divisor $H$ on \(X\) and a logarithmic Weil height function \(h_H \colon X(\QQb) \to \RR\) for \(H\). Letting $h_H^+(P)=\max(h_H(P),1)$, consider the quantities
\[
\underline{\alpha}_f(P) = \liminf_{n \to \infty} h^+_H(f^n(P))^{1/n}, \qquad
\overline{\alpha}_f(P) = \limsup_{n \to \infty} h^+_H(f^n(P))^{1/n}.
\]
Kawaguchi and Silverman proved in \cite[Proposition 12]{dynamical-arithmetic-rat-maps} that these quantities are independent of the choice of ample divisor \(H\). When $\underline{\alpha}_f(P)=\overline{\alpha}_f(P)$, the \emph{arithmetic degree} \(\alpha_f(P)\) is defined to be the common limit. Kawaguchi and Silverman made the following conjecture and proved it in the case when $f$ is a morphism \cite[Theorem 3]{ks-jordan}.

\begin{conjecture}[{\cite[Conjecture 6abc]{dynamical-arithmetic-rat-maps}}]
\label{conj:KS-finiteness}
If $P\in X_f(\QQb)$, then the limit $\alpha_f(P)$ exists. Moreover, 
\[
\{\alpha_f(Q)\mid Q\in X_f(\QQb)\}
\]
is a finite set of algebraic integers.
\end{conjecture}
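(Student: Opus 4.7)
The plan is to mirror the morphism-case proof of Kawaguchi-Silverman \cite{ks-jordan} by reducing to a setting where $f^*$ acts functorially on a N\'eron-Severi lattice and then extracting finitely many candidate growth rates. First I would resolve the indeterminacy of iterates by a sequence of blowups $\pi\colon \widetilde X \to X$, aiming for an \emph{algebraically stable} birational model $\widetilde f \colon \widetilde X \rat \widetilde X$ on which $(\widetilde f^n)^* = (\widetilde f^*)^n$ on $\mathrm{NS}(\widetilde X)_\RR$. On such a model, combining the height machine with the pullback formula for divisors should yield a one-step comparison $h_H(\widetilde f(Q)) \leq \delta_1(f)\, h_H(Q) + O(1)$ for $Q$ outside the indeterminacy locus, where $\delta_1(f)$ denotes the first dynamical degree.

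For existence of $\alpha_f(P)$, I would set $\alpha = \delta_1(f)$ and define a Tate-like limit $\widehat h_f(P) := \lim_n \alpha^{-n} h_H^+(f^n(P))$. Using the per-step bound together with a Cauchy estimate on partial telescoping sums, this limit should exist and satisfy $h_H^+(f^n(P)) = \widehat h_f(P)\, \alpha^n + o(\alpha^n)$. When $\widehat h_f(P)>0$ this yields $\alpha_f(P)=\alpha$; when $\widehat h_f(P)=0$ one iterates the construction on the subdominant eigenspaces of $\widetilde f^*$ to extract the correct exponential rate. For the finiteness statement, $\alpha_f(P)$ must then coincide with the modulus of an eigenvalue of $\widetilde f^*$ on $\mathrm{NS}(\widetilde X)_\RR$, and these eigenvalues are algebraic integers because $\widetilde f^*$ preserves the integral lattice $\mathrm{NS}(\widetilde X)$. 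Finite-dimensionality of $\mathrm{NS}(\widetilde X)_\RR$ then forces the set of arithmetic degrees to be finite.

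The hard part is the very first step: producing an $f$-equivariant algebraically stable model. For surface automorphisms this is guaranteed by Diller-Favre, but for higher-dimensional rational maps of the type built by Bedford-Kim or McMullen, no such model need exist, and the naive sequence of blowups is destroyed upon further iteration. In the absence of such a model, orbits can interact with the indeterminacy locus in ways not captured by any single finite birational model, so the effective cohomological spectrum relevant to a given orbit may genuinely depend on $P$. Preventing the resulting family of rates from accumulating or forming an infinite set, as the basepoint varies, is precisely the crux of the conjecture --- and is where I would expect either the argument to succeed or the construction to be forced into a counterexample.
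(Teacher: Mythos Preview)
The statement you are attempting to prove is a \emph{conjecture} that the paper in fact \emph{disproves}: Theorem~\ref{thm:counter ex exists} exhibits an explicit birational map $f\colon\PP^4\rat\PP^4$ together with infinitely many points $P_n\in X_f(\QQb)$ for which the arithmetic degrees $\alpha_f(P_n)$ take infinitely many distinct values. So there is no proof of Conjecture~\ref{conj:KS-finiteness} to be found, and any argument purporting to establish it must contain a genuine error.

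You have, to your credit, correctly located the point of failure. Your argument hinges entirely on the existence of a single birational model $\widetilde X$ on which $\widetilde f^*$ acts functorially on a \emph{finite-dimensional} lattice $\mathrm{NS}(\widetilde X)$, so that the possible growth rates are confined to the finitely many eigenvalue moduli of one integral matrix. The paper's counterexample shows exactly why this cannot be arranged: the map $f$ preserves a fibration $\PP^4\rat\AA^2$, and on a countable collection of special fibers $X_{t_n}$ the restriction $f_{t_n}$ becomes conjugate to a surface automorphism with first dynamical degree $\delta_n$, where the $\delta_n$ are pairwise distinct algebraic integers (Proposition~\ref{prop:BK-McM-facts}). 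Any single blow-up $\widetilde X\to\PP^4$ resolves only finitely many of these fibers, so no finite model sees all of the relevant cohomological data; the ``effective spectrum'' really does depend on the orbit. Your closing sentence anticipates precisely this outcome, but the proposal as written is still framed as a proof strategy for a statement that is false.
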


We prove the following result which gives a counterexample to Conjecture \ref{conj:KS-finiteness}.

\begin{theorem}
\label{thm:counter ex exists}
Let \(f \colon \PP^4 \rat \PP^4\) be the birational map defined by
\[
[X;Y;Z;A;B] \mapsto [XY + AX; YZ + BX; XZ; AX; BX].
\]
Then there exists a sequence of points $P_n\in X_f(\QQb)$ for which $\alpha_f(P_n)$ exists, and $\{\alpha_f(P_n)\}_n$ is an infinite set.
\end{theorem}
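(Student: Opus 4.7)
The key structural observation is that $f$ preserves a nontrivial rational projection. Since $A' = AX$ and $B' = BX$, the function $B/A$ is $f$-invariant; likewise $Z'/A' = XZ/(XA) = Z/A$ is $f$-invariant. These two independent invariants assemble into an $f$-equivariant rational map
\[
\pi \colon \PP^4 \rat \PP^2, \qquad [X;Y;Z;A;B] \mapsto [Z;B;A],
\]
on which $f$ acts as the identity. Hence $f$ descends to a two-parameter family of rational self-maps on the fibers of $\pi$. In the affine chart $A=1$ with base coordinates $(z,b)$ and fiber coordinates $(x,y)=(X,Y)$, the restriction of $f$ to the fiber over $(z,b)$ is the birational surface map
\[
g_{z,b} \colon \AA^2 \rat \AA^2, \qquad (x,y) \mapsto \left(y+1,\; \frac{yz+bx}{x}\right).
\]

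The plan is to produce an infinite sequence of parameter values $(z_n,b_n)\in\QQb^2$ for which (i) the corresponding surface maps $g_n := g_{z_n,b_n}$ lift to regular automorphisms of blowups $\sigma_n \colon S_n \to \PP^2$, and (ii) the first dynamical degrees $\lambda_n := \lambda_1(g_n)$ are pairwise distinct. This is exactly the type of output supplied by the Bedford--Kim and McMullen constructions: by choosing parameters so that the forward orbits of the indeterminacy points of $g_{z,b}$ terminate in a controlled combinatorial pattern, one iteratively blows up $\PP^2$ to resolve the map, and in favourable cases the lift is a genuine automorphism whose induced action on $\mathrm{NS}(S_n)$ has a prescribed spectral radius $\lambda_n$. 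One then selects the sequence so that the $\lambda_n$ form an infinite subset of $\QQb$ (for instance with $\lambda_n \to \infty$).

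Granted the triples $(S_n, g_n, \lambda_n)$, I would select a $\QQb$-point $P_n$ in the fiber of $\pi$ over $(z_n,b_n)$ whose forward $g_n$-orbit is well-defined and Zariski dense in $S_n$; such points are plentiful for very general choices in $S_n(\QQb)$. Because $g_n$ is a morphism of the smooth projective surface $S_n$, the morphism case of Kawaguchi--Silverman applies \cite[Theorem 3]{ks-jordan}, and a Zariski-dense orbit forces $\alpha_{g_n}(P_n)=\lambda_n$. Finally, I would compare heights: viewing $P_n$ as a point of $\PP^4$ via the locally closed embedding of the fiber, and passing through the birational morphism $\sigma_n$, the height of $f^k(P_n)$ with respect to $\mathcal O_{\PP^4}(1)$ differs from the height of $g_n^k(P_n)$ with respect to an ample class on $S_n$ by $O(1)$. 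Hence $\alpha_f(P_n)=\alpha_{g_n}(P_n)=\lambda_n$, and the set $\{\alpha_f(P_n)\}_n$ is infinite.

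The main obstacle is the Bedford--Kim step: identifying an explicit parameter sequence $(z_n,b_n)$ for which $g_{z,b}$ lifts to an automorphism with a computable and varying dynamical degree. This requires delicate combinatorial tracking of indeterminacy orbits and constitutes the essential input inherited from the quoted constructions. Once this is in place, the descent of arithmetic information from $S_n$ to $\PP^4$ is comparatively formal, though some care is required to handle the rational (rather than regular) nature of $\pi$ and to ensure that the chosen $P_n$ avoid the exceptional loci of $\sigma_n$ and the indeterminacy locus of $\pi$.
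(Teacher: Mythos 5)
Your overall strategy coincides with the paper's: recognize that $f$ fibers over the invariant coordinates, identify the fiber maps with the Bedford--Kim/McMullen family $(x,y)\mapsto (y+a,\,y/x+b)$, choose parameters for which these maps lift to automorphisms of blowups of $\PP^2$ with infinitely many distinct first dynamical degrees, and transfer the equality $\alpha=\lambda_1$ from dense-orbit points on the fibers up to $\PP^4$. Two small corrections to that part: in this family the degrees do not tend to infinity but increase monotonically to the real root of $x^3-x-1\approx 1.3247$ (harmless, since all you need is infinitely many distinct values), and the implication ``Zariski-dense orbit $\Rightarrow \alpha=\lambda_1$'' is not part of \cite[Theorem 3]{ks-jordan}, which only gives existence, algebraicity, and finiteness for morphisms; you need the theorem of Kawaguchi for automorphisms of surfaces, which is what the paper invokes.

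The genuine gap is the construction of the points $P_n$. You assert that $\QQb$-points with well-defined, Zariski-dense orbit are ``plentiful for very general choices in $S_n(\QQb)$,'' but $\QQb$ is countable, so a very general condition (the complement of a countable union of proper closed subsets) can a priori exclude every algebraic point. Moreover you must arrange that the \emph{entire} forward orbit in $\PP^4$ avoids $I_f$ and stays where the blowdown $\sigma_n$ is an isomorphism; otherwise the $O(1)$ height comparison you invoke breaks down, since the pullback of $\mathcal{O}_{\PP^4}(1)$ to $S_n$ is only big and nef, and the discrepancy between the two heights is unbounded as one approaches the exceptional divisors. The paper's solution is exactly the missing idea: the lifted automorphism preserves an anticanonical cuspidal cubic, its cusp $q_{t_n}$ is a fixed point lying away from the blown-up points and from $I_f$, and the derivative there is $\bigl(\begin{smallmatrix}\delta_n^{-2}&0\\0&\delta_n^{-3}\end{smallmatrix}\bigr)$, so $q_{t_n}$ is attracting. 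One takes a small basin $U_n$ avoiding $I_f$ and the exceptional locus; $U_n$ contains no periodic point other than $q_{t_n}$, and since $\lambda_1>1$ there are only finitely many periodic curves, so any $\QQb$-point of $U_n$ off those curves has an orbit that stays in $U_n$ (hence is well-defined and matches its lift) and, being non-periodic and on no periodic curve, is Zariski dense. Without some such device your argument does not produce a single admissible $P_n$.
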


The strategy we use to prove Theorem \ref{thm:counter ex exists} is actually inspired by another conjecture of Kawaguchi and Silverman \cite[Conjecture 6d]{dynamical-arithmetic-rat-maps}, namely that if $P\in X_f(\QQb)$ and $P$ has Zariski dense orbit under $f$, then $\alpha_f(P)$ is equal to the first dynamical degree $\lambda_1(f)$.  Consider a family $\pi\colon X\to T$ and a dominant rational map $f\colon X\dasharrow X$ which preserves fibers and induces a dominant rational map \(f_t \colon X_t \rat X_t\) on every fiber.  For generic values of $t$, the first dynamical degrees $\lambda_1(f)$ and $\lambda_1(f_t)$ agree, but it is possible to have a countable union of subvarieties $\mathcal{T} \subset T$ such that $\lambda_1(f_t)<\lambda_1(f)$ for all $t\in\mathcal{T}$, and for which infinitely many distinct values arise as \(\lambda_1(f_t)\).  Suppose that for all $t\in\mathcal{T}$ we can find $P_t\in X_t(\QQb)$ whose forward orbit under $f_t$ is well-defined and Zariski dense in $X_t$. Then we would expect that $\alpha_f(P_t)=\alpha_{f_t}(P_t)=\lambda_1(f_t)$. Since the set of \(\lambda_1(f_t)\) is infinite, this would achieve infinitely many different values for $\alpha_f$.

There are a few issues one must handle in order to turn the above strategy into a counterexample to Conjecture \ref{conj:KS-finiteness}.  First, we must produce a suitable map \(f\),  and ensure that there are points $P_t$ with dense orbit under \(f_t\) and whose orbits avoid the indeterminacy of \(f\).
Second, one would expect that $\alpha_{f_t}(P_t)=\lambda_1(f_t)$ but this requires a proof. The easiest way to show this is to work in a case where \cite[Conjecture 6d]{dynamical-arithmetic-rat-maps} is already known to hold. For this reason, we consider a family of surface maps with $f$ birational and where $f_t$ extends to an automorphism of a birational model of $X_t$, so that we can appeal to \cite{surface-aut,dynamical-equals-arithmetic}, which proves that \(\alpha_f(P) = \lambda_1(f)\) in this case. We implement this strategy based on constructions of Bedford--Kim \cite{bk-periodicities} and McMullen \cite{mcmullen-dynamics-blow-up}.

\section{Proof of Theorem \ref{thm:counter ex exists}}

We begin by taking the strategy described in the introduction and codifying it as the following result.

\begin{proposition}
\label{prop:strategy-codified}
Let \(X\) be a smooth projective variety over \(\QQb\), and \(\pi \colon X \to T\) be a projective morphism of \(\QQb\)-varieties with two-dimensional fibers. Let \(f \colon X \rat X\) be a birational map defined over \(T\) and suppose there is an infinite sequence of parameters \(t_n\in T(\QQb)\) satisfying the following:
\begin{enumerate}
\item\label{strategy::birat-aut} for each \(n\), there exists a birational model \(\pi_{t_n} \colon \widetilde{X}_{t_n} \to X_{t_n}\) so that \(f_{t_n}\) extends to an automorphism $\widetilde{f}_{t_n} \colon \widetilde{X}_{t_n} \to \widetilde{X}_{t_n}$;
\item\label{strategy::dense-orbit} for each \(n\), there exists a \(\QQb\)-point \(P_n\) of \(X_{t_n}\), contained in the open set where \(\pi_{t_n}\)is an isomorphism, and with well-defined $f$-orbit that is Zariski dense in \(X_{t_n}\);
\item\label{strategy::infinite-lambda1s} the set of values \(\lambda_1(f_{t_n})\) is infinite.
\end{enumerate}
Then the set of values of \(\alpha_f(P_n)\) is infinite.
\end{proposition}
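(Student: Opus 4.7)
The plan is to show $\alpha_f(P_n) = \lambda_1(f_{t_n})$ for each $n$; hypothesis~(\ref{strategy::infinite-lambda1s}) then delivers the infinite set of arithmetic degrees. Fix $n$, and let $\widetilde{P}_n \in \widetilde{X}_{t_n}(\QQb)$ be the unique lift of $P_n$, which exists because $P_n$ lies in the locus where $\pi_{t_n}$ is an isomorphism.

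First I would verify by induction on $k$ that $\pi_{t_n}(\widetilde{f}_{t_n}^k(\widetilde{P}_n)) = f^k(P_n)$. The identity $\pi_{t_n} \circ \widetilde{f}_{t_n} = f_{t_n} \circ \pi_{t_n}$ holds as rational maps from $\widetilde{X}_{t_n}$ to $X_{t_n}$. Since the forward $f$-orbit of $P_n$ is well-defined, $f_{t_n}$ is regular on a neighborhood $V$ of $f^{k-1}(P_n)$; both sides are then morphisms on $\pi_{t_n}^{-1}(V)$ which agree on a dense open, hence agree throughout. Consequently the $\widetilde{f}_{t_n}$-orbit of $\widetilde{P}_n$ surjects onto the $f$-orbit of $P_n$ via $\pi_{t_n}$. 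Since $\pi_{t_n}$ is a birational morphism of irreducible projective surfaces and the $f$-orbit is Zariski dense in $X_{t_n}$, the Zariski closure of the $\widetilde{f}_{t_n}$-orbit must contain an irreducible component mapping surjectively to $X_{t_n}$; birationality forces that component, and hence the closure, to be all of $\widetilde{X}_{t_n}$.

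By hypothesis~(\ref{strategy::birat-aut}), $\widetilde{f}_{t_n}$ is an automorphism of the smooth projective surface $\widetilde{X}_{t_n}$, so by \cite{surface-aut,dynamical-equals-arithmetic} the arithmetic degree $\alpha_{\widetilde{f}_{t_n}}(\widetilde{P}_n)$ exists and equals $\lambda_1(\widetilde{f}_{t_n}) = \lambda_1(f_{t_n})$, where the last equality uses birational invariance of the first dynamical degree in dimension two.

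The step I expect to require the most care is the identification $\alpha_f(P_n) = \alpha_{\widetilde{f}_{t_n}}(\widetilde{P}_n)$, which bridges the fiber inclusion $X_{t_n}\hookrightarrow X$ and the birational morphism $\pi_{t_n}$. Fix an ample divisor $H$ on $X$ and an ample divisor $\widetilde{H}$ on $\widetilde{X}_{t_n}$; the divisor $D := \pi_{t_n}^*(H|_{X_{t_n}})$ is big and nef on $\widetilde{X}_{t_n}$, and functoriality of Weil heights gives
\[
h_H^+(f^k(P_n)) = h_D^+(\widetilde{f}_{t_n}^k(\widetilde{P}_n)) + O(1).
\]
It therefore suffices to show that $h_D^+$ and $h_{\widetilde{H}}^+$ yield the same $k$th-root asymptotic along the orbit. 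One direction follows from ampleness of $a\widetilde{H} - D$ for $a \gg 0$, giving $h_D^+ \leq a h_{\widetilde{H}}^+ + O(1)$. For the reverse, bigness of $D$ lets one write $mD \sim A + E$ with $A$ ample and $E$ effective, and Zariski density of the orbit of $\widetilde{P}_n$ should bound the contribution of $\mathrm{Supp}(E)$ enough that the $k$th-root limits match; this is where I expect the technical obstacle to lie.
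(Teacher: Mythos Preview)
Your overall strategy coincides with the paper's: reduce to $\alpha_f(P_n)=\lambda_1(f_{t_n})$ by lifting to $\widetilde{P}_n$, invoking the Kawaguchi--Silverman result for surface automorphisms with dense orbit, and using birational invariance of $\lambda_1$. Your verification that $\widetilde{P}_n$ has Zariski dense $\widetilde{f}_{t_n}$-orbit is a detail the paper leaves implicit.

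The obstacle you flag in the final paragraph is genuine, and your proof is incomplete there: controlling $h_E$ along an orbit that may meet $\mathrm{Supp}(E)$ infinitely often is precisely the subtlety in replacing an ample height by a big-and-nef one, and Zariski density alone does not obviously suffice. The paper avoids this computation entirely by factoring the comparison $\alpha_f(P_n)=\alpha_{\widetilde{f}_{t_n}}(\widetilde{P}_n)$ into two steps, each of which is immediate or already in the literature. First, since $H\vert_{X_{t_n}}$ is ample on the fiber $X_{t_n}$, the equalities $h_H(f^k(P_n))=h_{H\vert_{X_{t_n}}}(f_{t_n}^k(P_n))$ give $\alpha_f(P_n)=\alpha_{f_{t_n}}(P_n)$ directly from the definition. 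Second, the birational invariance $\alpha_{f_{t_n}}(P_n)=\alpha_{\widetilde{f}_{t_n}}(\widetilde{P}_n)$ is quoted from \cite[Theorem~3.4]{mss}, and $\lambda_1(f_{t_n})=\lambda_1(\widetilde{f}_{t_n})$ from \cite[Theorem~1.(2)]{rat-maps-normal-vars}. In other words, rather than pushing through the big--nef height estimate by hand, you should simply cite these results; your direct approach would amount to reproving the relevant special case of \cite{mss}.
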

\begin{proof}
Fix an ample divisor $H$ on $X$. Since $H$ restricts to an ample on $X_t$, we see $\alpha_f(P)=\alpha_{f_t}(P)$ for all $P\in X_t(\QQb)$ 
such that the arithmetic degree is well-defined. So to complete the proof, it is enough to show $\alpha_{f_{t_n}}(P_n)=\lambda_1(f_{t_n})$.

Let \(\widetilde{P}_n\) be the unique point of \(\widetilde{X}_{t_n}\) with \(\pi_{t_n}(\widetilde{P}_n) = P_n\). 
We have $\alpha_{f_{t_n}}(P_n)=\alpha_{\widetilde{f}_{t_n}}(\widetilde{P}_n)$ by \cite[Theorem 3.4]{mss}, and $\lambda_1(f_{t_n})=\lambda_1(\widetilde{f}_{t_n})$ by \cite[Theorem 1.(2)]{rat-maps-normal-vars} and the discussion that follows it. Since $\widetilde{f}_{t_n}$ is a surface automorphism and $\widetilde{P}_n$ has dense orbit, \cite[Theorem 2c]{dynamical-equals-arithmetic} tells us that $\alpha_{\widetilde{f}_{t_n}}(\widetilde{P}_n)=\lambda_1(\widetilde{f}_{t_n})$, completing the proof.
\end{proof}

We next use a construction due in various guises to Bedford--Kim \cite{bk-periodicities} and McMullen \cite{mcmullen-dynamics-blow-up}. The relation between these two constructions is explained in the introduction of \cite{bk-dynamics-linear-fractional} as well as their remark on page 578. We 
collect the relevant facts from these papers in the following proposition.

\begin{proposition}
\label{prop:BK-McM-facts}
Let $X=\PP^2\times\AA^2$ and consider the map $f\colon X\dasharrow X$ whose fiber over $(a,b)\in\AA^2$ is given in affine coordinates by $f_{a,b}(x,y)=(y+a,\frac{y}{x}+b)$. There is a sequence $t_n=(a_n,b_n)\in\AA^2(\QQb)$ indexed by the integers $n\geq10$ with the following properties:
\begin{enumerate}
\item\label{BKMcM::Vn-lambda1} the first dynamical degree \(\lambda_1(f_{t_n})\) is given by the largest real root $\delta_n$ of the polynomial \(x^{n-2}(x^3-x-1)+ x^3 + x^2 - 1\);
\item\label{BKMcM::Vn-lambda1-limit} the numbers \(\delta_n\) increase monotonically in $n$ to \(\delta_\ast \approx 1.32472\dots\), the real root of \(x^3-x-1\);
\item\label{BKMcM::inv-cuspidal} there is an $f_{t_n}$-invariant cuspidal cubic curve \(C_{t_n} \subset \PP^2\) with cusp $q_{t_n}$ which is invariant under \(f_{t_n}\);
\item\label{BKMcM::birat-model-aut} there is a birational model \(\pi_{t_n}\colon\widetilde{X}_{t_n} \to \PP^2\) such that $f_{t_n}$ extends to an automorphism of $\widetilde{X}_{t_n}$; specifically, $\pi_{t_n}$ is a blow-up at $n$ points in the smooth locus of $C_{t_n}$;
\item\label{BKMcM::cusp} the point $q_{t_n}$ is not contained in the indeterminacy locus of \(f\);
\item\label{BKMcM::cusp-derivative} the derivative of \(f_{t_n}\) at $q_{t_n}$ is given in suitable coordinates by \(\left(
\begin{smallmatrix}
\delta_n^{-2} & 0 \\ 0 & \delta_n^{-3}
\end{smallmatrix}\right)\).
\end{enumerate}
\end{proposition}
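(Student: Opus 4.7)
The approach is essentially bookkeeping between our presentation of $f_{a,b}$ and the setups of \cite{bk-periodicities} and \cite{mcmullen-dynamics-blow-up}. First I would rewrite $f_{a,b}$ in projective coordinates on $\PP^2$, obtaining a quadratic birational self-map with three indeterminacy points and three contracted lines whose orbits can be traced combinatorially. Following Bedford--Kim, I would select parameters $(a_n, b_n)$ by imposing the condition that the orbit of one contracted point returns to an indeterminacy point after exactly $n$ steps, cutting out the required discrete family. Blowing up the resulting $n$-point orbit yields $\widetilde{X}_{t_n}$ and produces the automorphism $\widetilde{f}_{t_n}$ of item~(\ref{BKMcM::birat-model-aut}). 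The induced action $\widetilde{f}_{t_n}^*$ on $\mathrm{Pic}(\widetilde{X}_{t_n})$ is then computed in the basis consisting of the pullback of a line together with the $n$ exceptional classes; this matrix depends only on the combinatorics of the orbit, and its characteristic polynomial factors as a cyclotomic part times the polynomial of item~(\ref{BKMcM::Vn-lambda1}). Since $\widetilde{f}_{t_n}$ is an automorphism, its spectral radius on $H^2$ equals $\lambda_1(\widetilde{f}_{t_n}) = \lambda_1(f_{t_n})$.

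For item~(\ref{BKMcM::Vn-lambda1-limit}), I would rewrite the defining equation as $x^3 - x - 1 = -x^{-(n-2)}(x^3 + x^2 - 1)$, so that for $x > 1$ the right side tends to $0$ as $n \to \infty$; taking the largest real root gives $\delta_n \to \delta_\ast$. Monotonicity then follows by implicit differentiation of $\delta_n$ in $n$ (treated as a continuous parameter), or equivalently by evaluating the polynomial at $\delta_n$ and $\delta_{n+1}$ and comparing signs.

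Items~(\ref{BKMcM::inv-cuspidal}), (\ref{BKMcM::cusp}), and (\ref{BKMcM::cusp-derivative}) are all consequences of the essential structural input from McMullen: the $n$ blown-up points are chosen to lie on a common cuspidal cubic $C_{t_n}$, which is therefore $f_{t_n}$-invariant, and the cusp $q_{t_n}$ is fixed by $f_{t_n}$. One checks directly from the affine formula for $f_{a_n, b_n}$ that $q_{t_n}$ is disjoint from the indeterminacy locus, giving~(\ref{BKMcM::cusp}). For the derivative in~(\ref{BKMcM::cusp-derivative}), I would use a cuspidal parametrization $t \mapsto (t^2, t^3)$ of $C_{t_n}$, so that $t$ is a linear coordinate on $C_{t_n} \setminus \{q_{t_n}\} \cong \mathbb{G}_a$. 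McMullen's analysis identifies the induced action of $f_{t_n}$ on this $\mathbb{G}_a$ as multiplication by $\delta_n^{-1}$; pulling this back to ambient coordinates $(x, y) = (t^2, t^3)$ gives $x \mapsto \delta_n^{-2} x$ and $y \mapsto \delta_n^{-3} y$, yielding the stated diagonal derivative.

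The principal obstacle is the calibration step in~(\ref{BKMcM::cusp-derivative}): verifying that the eigenvalue of $f_{t_n}$ on the generalized Jacobian $\mathrm{Pic}^0(C_{t_n}) \cong \mathbb{G}_a$ is exactly $\delta_n^{-1}$, rather than some other power of $\delta_n$, requires a careful reading of McMullen's setup and a correct translation back to the $(x,y)$ coordinates on $\PP^2$. Everything else in the proposition is then a more-or-less direct extraction from the two references.
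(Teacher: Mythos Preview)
Your proposal is correct and follows essentially the same route as the paper: both are bookkeeping exercises that extract the listed facts from \cite{bk-periodicities} and \cite{mcmullen-dynamics-blow-up}, with the paper simply citing specific locations (McMullen p.~39 for the parameters $t_n$, his equation~(9.1) for the derivative eigenvalues, and \cite[Theorem~2]{bk-periodicities} for items~(\ref{BKMcM::Vn-lambda1}) and~(\ref{BKMcM::Vn-lambda1-limit})) where you instead sketch how those facts are established. The one place the paper's argument is cleaner than yours is item~(\ref{BKMcM::cusp}): rather than ``check directly from the affine formula'' that the cusp avoids the three indeterminacy points---awkward, since the cusp's location depends on $(a_n,b_n)$---the paper notes that the three indeterminacy points $p_1,p_2,p_3$ are among the $n$ blown-up points, all of which lie in the \emph{smooth} locus of $C_{t_n}$ by construction, whereas $q_{t_n}$ is the unique singular point of $C_{t_n}$; hence $q_{t_n}\notin\{p_1,p_2,p_3\}$ automatically, with no computation needed.
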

\begin{proof}
First note that the indeterminacy locus of $f$ is $\{(1:0:0),(0:1:0),(0:0:1)\}\times\AA^2$.

Let $t_n=(a_n,b_n)$ be as on page 39 of \cite{mcmullen-dynamics-blow-up}. Let $p_1=(0:0:1)$, $p_2=(1:0:0)$, $p_3=(0:1:0)$, and $p_{4+i}=f_{t_n}^i(a_n:b_n:1)$ for $0\leq i\leq n-4$. By construction 
(see \S7), the $p_j$ 
lie in the smooth locus of a cuspidal cubic curve $C_{t_n}$, 
and letting $\pi_{t_n}\colon\widetilde{X}_{t_n} \to \PP^2$ be the blow-up at the $p_j$, the map $f_{t_n}$ extends to an automorphism $\widetilde{f}_{t_n}$ of $\widetilde{X}_{t_n}$.\footnote{For reference, McMullen denotes $C_{t_n}$, $\widetilde{X}_{t_n}$, and $\widetilde{f}_{t_n}$ by $X_n$, $S_n$, and $F_n$, respectively.} Moreover, $\widetilde{f}_{t_n}$ preserves an irreducible curve $Y_n\subset\widetilde{X}_{t_n}$ in the complete linear system of the anti-canonical bundle, and $C_{t_n}=\pi_{t_n}(Y_n)$. Since the cusp $q_{t_n}$ of $C_{t_n}$ is not a smooth point of the curve, $q_{t_n}$ is necessarily distinct from the $p_j$. In particular, $\pi_{t_n}$ is an isomorphism in a neighborhood of $q_{t_n}$. Since $Y_n$ is preserved by $\widetilde{f}_{t_n}$, we see $q_{t_n}$ is fixed by $\widetilde{f}_{t_n}$ and hence $f_{t_n}$. Finally, $q_{t_n}$ is not in the indeterminacy locus of $f$ as $q_{t_n}\notin\{p_1,p_2,p_3\}$. This handles statements (\ref{BKMcM::inv-cuspidal})--(\ref{BKMcM::cusp}).

By equation (9.1) of \cite{mcmullen-dynamics-blow-up}, the derivative of $f_{t_n}$ at $q_{t_n}$ has eigenvalues $\lambda_1(f_{t_n})^{-2}$ and $\lambda_1(f_{t_n})^{-3}$ so (\ref{BKMcM::cusp-derivative}) will follow upon showing $\lambda_1(f_{t_n})=\delta_n$ in (\ref{BKMcM::Vn-lambda1}).

Lastly, taking into account differences in notation explained in the remark on page 578 of \cite{bk-dynamics-linear-fractional}, we see $(a_n,b_n)$ belongs to the locus $V_{n-3}$ as defined in their equation (0.2). Statements (\ref{BKMcM::Vn-lambda1}) and (\ref{BKMcM::Vn-lambda1-limit}) then follow from \cite[Theorem 2]{bk-periodicities} by taking $\alpha=(a,0,1)$ and $\beta=(b,1,0)$.
\end{proof}

We now prove the main result.

\begin{proof}[{Proof of Theorem \ref{thm:counter ex exists}}]
We keep the notation of Proposition \ref{prop:BK-McM-facts}. By construction, $f$ gives a rational self-map of $\AA^4$ sending $(x,y,a,b)$ to $(y+a,\frac{y}{x}+b,a,b)$.  Taking projective coordinates \([X;Y;Z;A;B]\) on \(\PP^4\), our map extends to the birational map \(f : \PP^4 \rat \PP^4\) given by
\[
[X;Y;Z;A;B] \mapsto [XY + AX; YZ + BX; XZ; AX; BX].
\]

To prove the theorem, we apply Proposition \ref{prop:strategy-codified}. Condition (\ref{strategy::birat-aut}) of the proposition is met by virtue of Proposition \ref{prop:BK-McM-facts} (\ref{BKMcM::birat-model-aut}), and condition (\ref{strategy::infinite-lambda1s}) follows from Proposition \ref{prop:BK-McM-facts} (\ref{BKMcM::Vn-lambda1}) and (\ref{BKMcM::Vn-lambda1-limit}). So we need only find $P_n\in X_{t_n}(\QQb)$ whose forward orbit under $f$ is well-defined and Zariski dense in $X_{t_n}=\PP^2$, and for which \(P_n\) lies in the locus where \(\pi_{t_n}\) is an isomorphism.

Notice that by Proposition \ref{prop:BK-McM-facts} (\ref{BKMcM::Vn-lambda1}) and (\ref{BKMcM::Vn-lambda1-limit}), for each $n\geq10$ we have $\lambda_1(f_{t_n})=\delta_n\geq\delta_{10}>1$. From \cite[Theorem 1.(2)]{rat-maps-normal-vars}, we see $\lambda_1(\widetilde{f}_{t_n})=\lambda_1(f_{t_n})>1$. Theorem 1.1 (1) and Lemma 2.4 (1) of \cite{zhang-periodic-aut} then show there are only finitely many $\widetilde{f}_{t_n}$-periodic curves.

By Proposition \ref{prop:BK-McM-facts} (\ref{BKMcM::inv-cuspidal}) and (\ref{BKMcM::cusp-derivative}), $q_{t_n}$ is an attracting fixed point of $f_{t_n}$.  Fixing a metric \(d\) on \(\PP^2(\CC)\), we find that there exists an analytic open set \(U_n \subset \PP^2(\CC)\) containing \(q_{t_n}\) for which 
\(f_{t_n}(U_n) \subseteq U_n\) and for which there exists a constant \(C < 1\) so that for any \(u\) in \(U_n\), we have \(d(f_{t_n}(u),q_{t_n}) < C \, d(u,q_{t_n})\).  In particular, the set \(U_n\) does not contain any \(f_{t_n}\)-periodic point other than \(q_{t_n}\).
By (\ref{BKMcM::birat-model-aut}) and (\ref{BKMcM::cusp}), we can choose $U_n$ so that it avoids the indeterminacy locus of $f$ and such that $\pi_{t_n}\colon \widetilde{U}_n = \pi_{t_n}^{-1}(U_n)\to U_n$ is an isomorphism. 

Let $\widetilde{P}_n$ be any $\QQb$-point of $\widetilde{U}_n \setminus \bigcup_{\text{$C$ is $\widetilde{f}_{t_n}$-periodic}} C$, and \(P_n = \pi_{t_n}(\widetilde{P}_n)\).
Notice that the $f$-orbit of $P_n$ is contained in $U_n$, so the orbit is well-defined and contained in the locus over which $\pi_{t_n}$ is an isomorphism. By construction, $\widetilde{P}_n$ is not contained in any $\widetilde{f}_{t_n}$-periodic curve. At last, since \(\widetilde{P}_n\) lies in \(\widetilde{U}_n\), it is not $\widetilde{f}_{t_n}$-periodic.   Since \(\widetilde{P}_n\) is not periodic and does not lie on any \(\widetilde{f}_{t_n}\)-periodic curve, it must have Zariski dense orbit under \(\widetilde{f}_{t_n}\), so that \(P_n\) has dense orbit under \(f_{t_n}\).
\end{proof}

\begin{remark}
One can imagine various corrections to Conjecture \ref{conj:KS-finiteness} to circumvent the counter-example of Theorem \ref{thm:counter ex exists}. For example, one might ask that the map \(f : X \rat X\) does not preserve any fibration.  This does not seem sufficient, however. Indeed, the map \(g : \PP^5 \rat \PP^5\) defined by
\[
[X;Y;Z;A;B;C] \mapsto [XY + AX; YZ + BX; XZ; AX + CY; BX + CZ; C^2]
\]
does not appear to preserve a fibration, but the hyperplane \(C = 0\) is \(g\)-invariant, and the restriction of \(g\) to this hyperplane is the map \(f : \PP^4 \rat \PP^4\) of Theorem~\ref{thm:counter ex exists}. One might instead attempt to correct Conjecture \ref{conj:KS-finiteness} by requiring either:
\begin{enumerate}
\item There is no subvariety \(Z \subset X\) such that \(f\vert_Z\) preserves a fibration; or
\item The points \(P_n\) are of bounded degree over \(\QQ\).
\end{enumerate}
\end{remark}
\noindent We know of no counterexamples in these settings.

\section*{Acknowledgments}

We are grateful to Joseph Silverman for useful comments.

\bibliographystyle{alpha}
\bibliography{refs}

\begin{thebibliography}{McM07}

\bibitem[BK06]{bk-periodicities}
Eric Bedford and Kyounghee Kim.
\newblock Periodicities in linear fractional recurrences: degree growth of
  birational surface maps.
\newblock {\em Michigan Math. J.}, 54(3):647--670, 2006.

\bibitem[BK09]{bk-dynamics-linear-fractional}
Eric Bedford and Kyounghee Kim.
\newblock Dynamics of rational surface automorphisms: linear fractional
  recurrences.
\newblock {\em J. Geom. Anal.}, 19(3):553--583, 2009.

\bibitem[Dan17]{rat-maps-normal-vars}
Nguyen-Bac Dang.
\newblock Degrees of iterates of rational maps on normal projective varieties.
\newblock 2017.

\bibitem[Kaw08]{surface-aut}
Shu Kawaguchi.
\newblock Projective surface automorphisms of positive topological entropy from
  an arithmetic viewpoint.
\newblock {\em Amer. J. Math.}, 130(1):159--186, 2008.

\bibitem[KS14]{dynamical-equals-arithmetic}
Shu Kawaguchi and Joseph~H. Silverman.
\newblock Examples of dynamical degree equals arithmetic degree.
\newblock {\em Michigan Math. J.}, 63(1):41--63, 2014.

\bibitem[KS16a]{ks-jordan}
Shu Kawaguchi and Joseph~H. Silverman.
\newblock Dynamical canonical heights for {J}ordan blocks, arithmetic degrees
  of orbits, and nef canonical heights on abelian varieties.
\newblock {\em Trans. Amer. Math. Soc.}, 368(7):5009--5035, 2016.

\bibitem[KS16b]{dynamical-arithmetic-rat-maps}
Shu Kawaguchi and Joseph~H. Silverman.
\newblock On the dynamical and arithmetic degrees of rational self-maps of
  algebraic varieties.
\newblock {\em J. Reine Angew. Math.}, 713:21--48, 2016.

\bibitem[McM07]{mcmullen-dynamics-blow-up}
Curtis~T. McMullen.
\newblock Dynamics on blowups of the projective plane.
\newblock {\em Publ. Math. Inst. Hautes \'Etudes Sci.}, (105):49--89, 2007.

\bibitem[MSS17]{mss}
Yohsuke Matsuzawa, Kaoru Sano, and Takahiro Shibata.
\newblock Arithmetic degrees and dynamical degrees of endomorphisms on
  surfaces.
\newblock 2017.

\bibitem[Sil14]{arithmetic-degree-first-defined}
Joseph~H. Silverman.
\newblock Dynamical degree, arithmetic entropy, and canonical heights for
  dominant rational self-maps of projective space.
\newblock {\em Ergodic Theory Dynam. Systems}, 34(2):647--678, 2014.

\bibitem[Zha10]{zhang-periodic-aut}
De-Qi Zhang.
\newblock The {$g$}-periodic subvarieties for an automorphism {$g$} of positive
  entropy on a compact {K}\"ahler manifold.
\newblock {\em Adv. Math.}, 223(2):405--415, 2010.

\end{thebibliography}
\end{document}